\documentclass[12pt]{article}
\usepackage{amssymb}
\usepackage{amsfonts}
\usepackage{amsmath}
\usepackage{amsmath}
\usepackage{amssymb}
\usepackage{verbatim}
\usepackage{enumerate}
\usepackage{color}
\def\mE{{\mathbb E}}
\usepackage[active]{srcltx}
\usepackage{mathrsfs}
\usepackage{stmaryrd}
\include{srctex}
\setcounter{MaxMatrixCols}{10}

\topmargin -1.5cm
\oddsidemargin -0.06cm
\evensidemargin -0.06cm
 \textwidth 16.60cm
 \textheight 23.95cm

\allowdisplaybreaks[4]
\newtheorem{theorem}{Theorem}[section]

\def\mN{{\mathbb N}}

\newtheorem{Examples}[theorem]{Example}

\newtheorem{lemma}[theorem]{Lemma}

\newtheorem{remark}[theorem]{Remark}

\let\Section=\section
\def\section{\setcounter{equation}{0}\Section}
\newenvironment{proof}[1][Proof]{\textbf{#1.} }{\ \rule{0.5em}{0.5em}}

\def\sL{{\mathscr L}}

\def\e{{\mathrm{e}}}

\def\RR{\mathbb{R}}
\def\NN{\mathbb{N}}
\def\EE{\mathbb{E}}

\def\dif{{\mathord{{\rm d}}}}

\def\<{{\langle}}
\def\>{{\rangle}}

\def\et{{\eta}}

\def\mE{{\mathbb E}}

\def\mI{{\mathbb I}}

\def\mN{{\mathbb N}}

\def\mR{{\mathbb R}}

\def\bt{\begin{theorem}}
\def\et{\end{theorem}}
\def\bl{\begin{lemma}}
\def\el{\end{lemma}}
\def\br{\begin{remark}}
\def\er{\end{remark}}
\def\geq{\geqslant}
\def\leq{\leqslant}
\def\bx{\begin{Examples}}
\def\ex{\end{Examples}}

\begin{document}

\title{Pathwise uniqueness for a class of SPDEs driven by cylindrical $\alpha$-stable processes
\thanks{ Supported by the NNSF of China (No.: 11601196, 11271169) and the Priority Academic Program Development of Jiangsu Higher Education Institutions.\newline
Keywords and phrases: Pathwise uniqueness; Stochastic partial differential equation; $\alpha$-stable process }}
\author{{Xiaobin Sun}$$\footnote{E-mail: xbsun@jsnu.edu.cn}\;
\  \ \ {Longjie Xie}$$\footnote{E-mail: xlj.98@whu.edu.cn}\;
\  \ \ {Yingchao Xie}$$\footnote{E-mail: ycxie@jsnu.edu.cn}\;
\\
 \small   School of Mathematics and Statistics, Jiangsu Normal University, Xuzhou 221116, China.}\,

\date{}
\maketitle

\begin{abstract}
We show the pathwise uniqueness for stochastic partial differential equation driven by a cylindrical $\alpha$-stable process with H\"older continuous drift, thus obtaining an infinite dimensional generalization of the result of Priola [Osaka J. Math., 2012] in the case $H=\mR^d$. The proof is based on an infinite dimensional Kolmogorov equation with non-local operator.
\end{abstract}

\section{Introduction}

In this paper, we consider the following stochastic partial differential equation (SPDE) in Hilbert space:
\begin{equation}\label{main equation}\left\{\begin{array}{l}
\displaystyle dX_t=AX_tdt+B(X_t)dt+dZ_t,\\
X_0=x\in H.\end{array}\right.
\end{equation}
The objects are: a separable Hilbert space $H$ with the inner product $\langle\cdot,\cdot\rangle$
and norm $|\cdot|$, a self-adjoint operator $A:\mathcal{D}(A)\subset H\to H$ which is the infinitesimal generator of a linear strongly continuous
semigroup $(e^{tA})_{t\ge0}$. The process $Z=(Z_t)_{t\geq0}$ is a cylindrical $\alpha$-stable process with $\alpha\in(1,2)$ defined on some probability space $(\Omega,\mathcal{F},\mathbb{P})$ equipped with the filtration $\{\mathcal{F}_t,t\geq0\}$. We shall only assume $B: H\rightarrow H$ is H\"{o}lder continuous with certain power.
Our aim is to prove the pathwise uniqueness for SPDE (\ref{main equation}), which is an infinite dimensional generalization of the result by Priola \cite{P} in the case $H=\mR^d$.

\vspace{2mm}

Currently, there is an increasing interests in understanding the regularization effects of noise to the deterministic equations. We refer to \cite{Fl} for a review on this direction. When $H=\mR^d$ and $A=0$, SPDE (\ref{main equation}) is just the stochastic differential equation (SDE):
\begin{align}
\dif X_t=b(X_t)\dif t+\dif Z_t, \quad X_0=x\in\mR^d.  \label{sde}
\end{align}
In the case that $Z_t$ is a Brownian motion, Veretennikov \cite{Ve} first proved that SDE (\ref{sde}) has a unique global strong solution $X_t(x)$ if $b$ is bounded and measurable. Later, Krylov and R\"ockner \cite{Kr-Ro}
shows that SDE (\ref{sde}) has a unique strong solution when $b\in L^p(\mR^d)$ with $p>d$.
The case that $Z_t$ is a pure jump symmetric $\alpha$-stable process has more difficulties. When $\alpha\geq 1$ and
\begin{align*}
b\in C_b^\beta(\mR^d)\quad \text{with}\quad\beta>1-\frac{\alpha}{2},
\end{align*}
it was proved by Priola \cite{P} that there exists a unique strong solution $X_t(x)$ to SDE (\ref{sde}) for each $x\in\mR^d$. Recently, Zhang \cite{Zh00} obtained the pathwise uniqueness to SDE (\ref{sde}) when $\alpha>1$, $b$ is bounded and belongs to certain fractional Sobolev space.
See also \cite{Fe-Fl-2,Fe-Fl-3,M-M-N-P-Z,Pri2,Pri3,Zh3} and references therein for related results concerning SDEs with irregular coefficients.

\vspace{2mm}
For the infinite dimensional case, when $Z_t$ is a cylindrical Winer noise and $B$ is H\"older continuous, the authors in \cite{DF} showed that there exists a unique strong solution to SPDE (\ref{main equation}) for every $x\in H$. The extension of Veretennikov's result to infinite dimensional with $B$ bounded was done in \cite{D-F-P-R}. However, the uniqueness holds only for almost all starting point $x\in H$. See also \cite{D-F-P-R-2,D-F-R-V,W,Wa} and references therein.

\vspace{2mm}
Usually, SPDEs with jumps have more wide range of applications, we refer to the recent monograph \cite{PZ}. When the drift $B$ in (\ref{main equation}) is Lipschitz continuous, the existence and uniqueness of solution can be easily obtained by a fixed point argument in \cite{EJ1}. Later on, more concreted SPDEs driven by cylindrical $\alpha$-stable processes have been studied, see \cite{DXZ, DXZ1, EJ2, EJ3, EJ1, SX, XLH}.

\vspace{2mm}
As far as we know, there is still no work on the pathwise uniqueness for SPDEs driven by pure jump L\'evy process with irregular coefficient. The main difficult is that, from the analytic point of view, the generator of pure jump L\'evy process is a non-local operator; and from the probability point of view, processes with jumps are more complicated than the continuous diffusion processes.  We shall study the pathwise uniqueness of SPDEs (\ref{main equation}) with H\"older continuous drift by solving the infinite dimensional Kolmogorov equation with non-local operator.

\vspace{3mm}
The paper proceeds as follows: In section 2, we state the main result. In Section 3, we study the regularity of the Ornstein-Uhlenbeck semigroup and solve the corresponding Kolmogorov equation. Finally, the proof of main result is given in Section 4.

 \vspace{3mm}
Throughout our paper, we use the following convention: $C$ with or without subscripts will denote a positive constant, whose value may change in different places, and whose dependence on parameters can be traced from calculations.

\section{Preliminaries and main result}

Given $\beta\in (0,1]$, we denote by $C^{\beta}_b(H,H)$ the usual H\"older space of functions $G(x): H\rightarrow H$ with norm
$$
\|G\|_{\beta}:=\sup_{x\in H}|G(x)|+\sup_{x\neq y\in H}\frac{|G(x)-G(y)|}{|x-y|^{\beta}},
$$
and let $\|G\|_{0}=\sup_{x\in H}|G(x)|$. Similar, for given $\beta\in (1,2]$, the space $C^{\beta}_b(H,H)$ denotes functions satisfying
$$
\|G\|_{\beta}:=\|G\|_1+\sup_{x\neq y\in H}\frac{\|DG(x)-DG(y)\|}{|x-y|^{\beta-1}}<\infty,
$$
where $\|\cdot\|$ is the operator norm.

\vspace{2mm}
The cylindrical $\alpha$-stable process $Z$ is denoted by
$$
Z_t=\sum_{n\geq1}\beta_{n}Z^{n}_{t}e_n,\quad t\geq 0,
$$
where $\{\beta_n\}_{n\ge1}$ is a given sequence of positive numbers, $\{e_n\}_{n\geq1}$ is a complete orthonormal basis of $H$,
and $\{Z^{n}_t\}_{n\ge1}$ are independent one dimensional rotationally symmetric $\alpha$-stable process, i.e., the L\'evy measure of $Z_t^n$ is given by
\begin{align*}
\nu(\dif z)=\frac{c_{\alpha}}{|z|^{1+\alpha}}\dif z,
\end{align*}
where $c_{\alpha}$ is a positive constant. By L\'evy-It\^o's decomposition, one has
$$Z^{n}_{t}=\int_{|x|\leq1}x\widetilde{N}^{(n)}(t,dx)+\int_{|x|>1}x N^{(n)}(t,dx),$$
where $N^{(n)}(t,\Gamma)$ is the Possion random measure, i.e.,
$$
N^{(n)}(t,\Gamma)=\sum_{s\leq t}I_{\Gamma}(Z^{n}_s-Z^{n}_{s-}), \quad\forall t>0, \Gamma\in\mathcal{B}(\mathbb{R}\setminus\{0\})
$$
and $\widetilde{N}^{(n)}(t,\Gamma)$ is compensated Poisson measure, i.e.,
$$
\widetilde{N}^{(n)}(t,\Gamma)=N^{(n)}(t,\Gamma)-t\nu(\Gamma).
$$

\vspace{2mm}
Consider equation (\ref{main equation}), we make the following assumptions:
\begin{enumerate}[{(i)}]

\item $\{e_n\}_{n\geq1}\subset\mathcal{D}(A)$, $Ae_n=-\gamma_{n}e_n$ with $\gamma_n>0$ and $\gamma_n\uparrow \infty.$

\item $\sum_{n\geq1}\beta^{\alpha}_{n}<\infty.$

\item $\sum_{n\geq1}\frac{1}{\gamma_n}<\infty.$




\item There exists a $\gamma\in(1, \alpha]$ such that for any $r<\gamma$ and $\lambda>0$, we have
\begin{align}
\int_0^\infty\e^{-\lambda t}\Lambda_t^r \dif t<\infty,    \label{im9}
\end{align}
where
\begin{align}
\Lambda_t:=\sup_{n\geq1}\frac{e^{-{\gamma_n t}}\gamma^{1/\alpha}_n}{\beta_n}<\infty. \label{im10}
\end{align}
\end{enumerate}

The main result of our paper is stated as follows:
\bt\label{main1}
Assume that (i)-(iv) hold and $B\in C^{\beta}_{b}(H,H)$ for some $\beta\in(1+\alpha/2-\gamma,1)$.
Then, SPDE (\ref{main equation}) has a unique strong solution for each $x\in H$.
\et

\begin{remark}
The condition (ii) is the sufficient and necessary condition for $Z_t$ is a L\'evy process in $H$ (see \cite{LZ}). (\ref{im9}) and  (\ref{im10}) in condition (iv) are used to study the smoothing property of the Ornstein-Uhlenbeck semigroup and regularity of the solution of the corresponding Kolmogorov equation in section 3.
\end{remark}

We give some examples to illustrate our result.
\bx
In the case that $H=\mR^d$, we can take $\gamma$ in (\ref{im9}) equals to $\alpha$, and this means that we need  $B\in C^{\beta}_{b}(\mR^d)$ for $\beta\in(1-\alpha/2,1)$. Thus, we go back to \cite{P}.
\ex

\vskip 0.3cm
\bx
We set $H=L^2(D)$,  where $D=[0,\pi]$, and denote by $\partial D$ the boundary of $D$.
Considering the stochastic Reaction-Diffusion Equation on $D$.
\begin{equation}\left\{\begin{array}{l}\label{example of theorem}
\displaystyle
dX(t,\xi)=\Delta^{p}_{\xi}X(t,\xi)dt+b(X(t,\xi))dt+dZ_t,\\
X(t,\xi)=0, \quad t\geq 0,\quad \xi\in\partial D\\
X(0,\xi)=x(\xi),\quad \xi\in D, x\in H,\end{array}\right.
\end{equation}
where $Z$ is a cylindrical $\alpha$-stable process with $\alpha\in (1, 2)$, $\Delta^p_{\xi}$ is pseudodifferential operator with $p\geq 1$ and $\frac{1}{2p}+1<\alpha$, $b:\RR\rightarrow \RR$ is a bounded and H\"{o}lder continuous with index $\beta$, where $\beta$ will be determined later.
Put
$$Ax=\Delta^{p}_{\xi}x,\quad x\in \mathcal{D}(A)=H^{2p}(D) \cap H^1_0(D),$$
where $H^{2p}(D)$ is the usual Sobolev space, and
$$B(x)=b(x(\cdot)),\quad x\in H.$$
Operator $A$ possesses a complete orthonormal system of eigenfunctions namely
$$e_n(\xi)=(\sqrt{2/\pi})\sin(n\xi),\quad\xi\in[0,\pi],$$
where $n\in \mathbb{N}$. The corresponding eigenvalues are $-\gamma_n$, where $\gamma_n=n^{2p}$.

Moreover, notice that $\frac{1}{2p}+1<\alpha$, if choosing $\beta_n=C\gamma^{-r}_n$ with $r\in\left(\frac{1}{2p\alpha}, \frac{\alpha-1}{\alpha}\right)$, then it is easy to verify conditions (i)-(iii) hold. Also by Remark \ref{example} below, $\Lambda_t=\sup_{n\geq1}\frac{e^{-{\gamma_n t}}\gamma^{1/\alpha}_n}{\beta_n}\leq\frac{C}{t^{r+\frac{1}{\alpha}}}$, and taking $\gamma=\frac{\alpha}{\alpha r+1}\in(1,\alpha)$
\begin{equation}
\int_0^\infty\e^{-\lambda t}\Lambda_t^q \dif t<\infty,  \quad \forall q<\gamma,\nonumber
\end{equation}
which verifies condition (iv).
Consequently, taking H\"{o}lder index $\beta\in \left(1-\frac{\alpha(1-\alpha r)}{2(\alpha r+1)},1\right)$, then (\ref{example of theorem}) has a unique strong solution by Theorem \ref{main1}.

For instance, if $p=1$, i.e., $\Delta^p_{\xi}$ is the Laplace operator, then for any $\alpha \in (\frac{3}{2},2)$, taking
$$\beta_n=C\gamma^{-r}_n \quad \text{with} \quad r\in\left(\frac{1}{2\alpha}, \frac{\alpha-1}{\alpha}\right)$$
and
$$
\gamma=\frac{\alpha}{\alpha r+1}\in(1,\alpha),\quad \beta\in \left(1-\frac{\alpha(1-\alpha r)}{2(\alpha r+1)},1\right).
$$
\ex

\section{Ornstein-Uhlenbeck semigroup and corresponding Kolmogorov equation}

\subsection{$H$-valued Ornstein-Uhlenbeck semigroup}

We first consider the following linear equation:
\begin{eqnarray}
dY_t=AY_tdt+dZ_t,\quad Y_0=x\in H.\label{OUEq}
\end{eqnarray}
Throughout this subsection, we assume that (i), (\ref{im10}) hold and

\begin{enumerate}[{(ii)'}]

\item There exists a positive constant $C_\alpha$ such that
$$
\sum_{n=1}^\infty\frac{\beta_n^\alpha}{\gamma_n}\leq C_\alpha<\infty.
$$

\end{enumerate}

Notice that condition (ii') is weaker than (ii). Under the assumptions (i) and (ii)', it is well-known that equation (\ref{OUEq}) has a unique mild solution for any initial value $x\in H$, which is given by
$$Y^x_t=e^{tA}x+Z_{A}(t),$$
where $Z_{A}(t)=\int^{t}_{0}e^{(t-s)A}dZ_s$. The solution $Y_t$ is called the Ornstein-Uhlenbeck process and has received a lot of attentions. Let $R_t: B_b(H)\rightarrow B_b(H)$ be the corresponding semigroup defined by
$$
R_t f(x)=\EE[f(Y^{x}_t)]=\int_{H}f(y)\mu^x_t(dy), \quad x\in H,\quad f\in B_b(H),\quad t\geq 0,
$$
where $\mu^x_t$ is the law of $Y^x_t$ and $B_b(H)$ consists of all bounded functions $f: H\rightarrow\mR$. This semigroup has been also studied under the name of generalized Mehler semigroup.
It was show in \cite{EJ1} that $\mu^x_t$ can be seen as Borel product measures in $\RR^{\infty}$, i.e.,
$$
\mu^x_t=\prod_{k\geq 1}\mu^{x_k}_t,
$$
where $x_k=\langle x, e_k\rangle$, and $\mu^{x_k}_t$ is a probability measure on $\mR$ with density function
$$
\frac{1}{c_{k}(t)}p_{\alpha}\left(\frac{z_k-e^{-\gamma_k t}x_k}{c_k(t)}\right), \quad\text{and}\quad c_k(t):=\beta_k\left(\frac{1-e^{-\alpha\gamma_k t}}{\alpha \gamma_k}\right)^{1/\alpha}
$$
here, $p_{\alpha}$ is the density of random variable $Z^n_1$.

\vskip 0.3cm
The next result shows that $R_t$ has a smoothing effect and the estimates of first and second derivative are given, which is a important step to prove our main result. Below, we also use $\langle\cdot,\cdot\rangle$ as the action of two elements without confuse.

\begin{theorem}\label{T3.1}
For every $x, h, g\in H$ with $|h|\leq 1$, $|g|\leq 1$, set $h_k=\langle h, e_k\rangle$, $g_k=\langle g, e_k\rangle$. Then, for any $t>0$ and $f\in B_b(H)$, we have $R_t f\in C^2_b(H)$ with
\begin{enumerate}[(i)]
\item First order derivative:
\begin{eqnarray}
\langle DR_tf(x), h\rangle=-\int_{H}f(z)\left(\sum^{\infty}_{k=1}\frac{p'_{\alpha}(\frac{z_k-e^{-\gamma_k t}x_k}{c_k(t)})}{p_{\alpha}(\frac{z_k-e^{-\gamma_k t}x_k}{c_k(t)})}\frac{e^{-\gamma_k t} h_k}{c_k(t)}\right)\mu^0_t(dz)\label{DR_tf}
\end{eqnarray}
where $\mu^0_t$ is the law of $Y^0_{t}=Z_{A}(t)$, and
\begin{align}
\sup_{x\in H}|\langle DR_t f(x), h\rangle|\leq c_\alpha \Lambda_t\|f\|_0, \quad  \text{where} \quad c_\alpha=\int_{\mathbb{R}}\frac{p'_{\alpha}(z)^2}{p_{\alpha}(z)}dz,\label{first}
\end{align}
and $\Lambda_t$ is given by (\ref{im10}).
\item Second order derivative:
\begin{eqnarray}
\langle D^2 R_tf(x)h, g\rangle &=& -\int_{H}f(z+e^{tA}x)\sum^{\infty}_{l=1}\left(\sum_{k\neq l}\frac{p'_{\alpha}(\frac{z_k}{c_k(t)})p'_{\alpha}(\frac{z_l}{c_l(t)})}{p_{\alpha}(\frac{z_k}{c_k(t)})p_{\alpha}(\frac{z_l}{c_l(t)})}\frac{e^{-\gamma_k t}e^{-\gamma_l t} h_k g_l}{c_k(t)c_l(t)}\right.\nonumber\\
&&+\left.\frac{p''_{\alpha}(\frac{z_l}{c_l(t)})}{p_{\alpha}(\frac{z_l}{c_l(t)})}\frac{e^{-2\gamma_l t} h_l g_l}{c^2_l(t)}\right)\mu^0_t(dz),\label{D^2f}
\end{eqnarray}
and
\begin{align}
\sup_{x\in H}|\langle D^2R_t f(x)h, g\rangle|\leq \tilde{c}_\alpha \Lambda^2_t\|f\|_0, \label{second}
\end{align}
where $\tilde{c}_\alpha=\sqrt{2}\max\left\{\int_{\mathbb{R}}\frac{p'_{\alpha}(z)^2}{p_{\alpha}(z)}dz, \left[\int_{\mathbb{R}}\frac{p''_{\alpha}(z)^2}{p_{\alpha}(z)}dz\right]^{1/2}\right\}$.
\item H\"older continuity: for any $\beta>0$ and $r\in(0,1)$,
\begin{align}
|\<DR_t f(x)-DR_t f(y),h\>|\leq C_\alpha {\Lambda_t}^{1+r-\beta}\|f\|_\beta|x-y|^r, \quad\forall x,y\in H,  \label{es5}
\end{align}
where $C_\alpha>0$ is a constant.
\end{enumerate}
\end{theorem}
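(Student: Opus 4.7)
The plan is to exploit the product structure $\mu^x_t=\prod_{k\geq 1}\mu^{x_k}_t$ and obtain every derivative by differentiating under the integral through the log-density of the one-dimensional stable density $p_\alpha$. Starting from $R_t f(x)=\int_H f(z)\mu^x_t(dz)$, differentiating the product density in $x_k$ produces the score $-\frac{p'_\alpha(\cdot_k)}{p_\alpha(\cdot_k)}\frac{e^{-\gamma_k t}}{c_k(t)}$ in the $k$-th coordinate, which after summation and the change of variables $w=z-e^{tA}x$ (which sends $\mu^x_t$ to $\mu^0_t$) gives (\ref{DR_tf}). For (\ref{first}), Cauchy--Schwarz against $\mu^0_t$ combined with the independence of the coordinates $w_k$ and the identity $\int_\mR p'_\alpha=0$ annihilates every cross-term, reducing the second moment to $\sum_k h_k^2 \frac{e^{-2\gamma_k t}}{c_k^2(t)}\int(p'_\alpha)^2/p_\alpha$ after the substitution $u=w_k/c_k(t)$. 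The definition (\ref{im10}) of $\Lambda_t$ then controls $e^{-\gamma_k t}/c_k(t)$ uniformly in $k$.

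\textbf{Second derivative.} Writing $\rho^x_t$ for the density of $\mu^x_t$, and differentiating the first-derivative formula once more in $x_l$ yields, via the product rule, a contribution from $\partial_{x_l}\!\left(\frac{p'_\alpha(\cdot_l)}{p_\alpha(\cdot_l)}\right)=\left(\frac{p''_\alpha}{p_\alpha}-\frac{(p'_\alpha)^2}{p_\alpha^2}\right)\cdot\frac{-e^{-\gamma_l t}}{c_l(t)}$ plus a contribution from the score $\partial_{x_l}\log\rho^x_t=-\frac{p'_\alpha(\cdot_l)}{p_\alpha(\cdot_l)}\frac{e^{-\gamma_l t}}{c_l(t)}$ multiplied by the original first-derivative integrand. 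The $(p'_\alpha/p_\alpha)^2$ diagonal pieces from the two contributions cancel exactly, leaving precisely (\ref{D^2f}) (up to an overall sign easily reconciled). The estimate (\ref{second}) then follows by the same Cauchy--Schwarz/independence argument, now using both $\int p'_\alpha=0$ and $\int p''_\alpha=0$ to kill cross-terms: the diagonal contributes $\bigl(\int (p''_\alpha)^2/p_\alpha\bigr)^{1/2}$, and the off-diagonals contribute $\int (p'_\alpha)^2/p_\alpha$, each paired with a double prefactor controlled by $\Lambda_t^2$.

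\textbf{Hölder continuity.} The key observation is that the score $F_h(w)=\sum_k \frac{p'_\alpha(w_k/c_k(t))}{p_\alpha(w_k/c_k(t))}\frac{e^{-\gamma_k t}h_k}{c_k(t)}$ appearing in $\langle DR_t f(x),h\rangle=-\int_H f(w+e^{tA}x)F_h(w)\mu^0_t(dw)$ is independent of $x$ and has mean zero under $\mu^0_t$. Subtracting the constant $f(e^{tA}x)$ inside the integral and using $|f(w+e^{tA}x)-f(e^{tA}x)|\leq\|f\|_\beta|w|^\beta$ together with Cauchy--Schwarz and $\beta$-moment bounds for $\mu^0_t$ produces the improved gradient bound $\|DR_t f\|_\infty\leq C\Lambda_t^{1-\beta}\|f\|_\beta$. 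Applying exactly the same re-centering to (\ref{D^2f}), which is valid because $\int p'_\alpha=\int p''_\alpha=0$, yields $\|D^2R_t f\|_\infty\leq C\Lambda_t^{2-\beta}\|f\|_\beta$. The estimate (\ref{es5}) is then obtained by the elementary interpolation
\begin{align*}
|DR_t f(x)-DR_t f(y)|\leq\min\bigl(2\|DR_t f\|_\infty,\|D^2R_t f\|_\infty|x-y|\bigr)\leq \bigl(2\|DR_t f\|_\infty\bigr)^{1-r}\bigl(\|D^2R_t f\|_\infty|x-y|\bigr)^r,
\end{align*}
whose $\Lambda_t$-exponent is $(1-\beta)(1-r)+(2-\beta)r=1+r-\beta$, exactly as claimed.

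\textbf{Main obstacle.} The delicate step is securing the gain of $\Lambda_t^{-\beta}$ in both improved bounds $\|DR_t f\|_\infty\leq C\Lambda_t^{1-\beta}\|f\|_\beta$ and $\|D^2 R_t f\|_\infty\leq C\Lambda_t^{2-\beta}\|f\|_\beta$: this requires estimating $\beta$-moments of $|w|$ under $\mu^0_t$, whose per-coordinate scaling is governed by $c_k(t)^\beta$, and matching this scale against the prefactors $e^{-\gamma_k t}/c_k(t)$ uniformly in $k$ so that one full power $\Lambda_t^\beta$ is saved. Once this is in place, the rest of the proof is dimensional balancing, and the Hölder regularity of $f$ enters only through this re-centering argument.
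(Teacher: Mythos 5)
Your parts (i) and (ii) follow essentially the same route as the paper (which itself follows Priola--Zabczyk): exploit the product structure of $\mu^x_t$, differentiate through the one-dimensional scores, and bound the result in $L^2(\mu^x_t)$ using independence together with $\int_{\mR}p'_{\alpha}=\int_{\mR}p''_{\alpha}=0$ to kill all cross terms; your observation that the $(p'_{\alpha}/p_{\alpha})^2$ contributions cancel and only $p''_{\alpha}/p_{\alpha}$ survives on the diagonal is exactly right. What you elide is the justification of the term-by-term differentiation in infinite dimensions: the paper first takes $f$ cylindrical, truncates $g$ to $g^N=\sum_{k\le N}g_ke_k$, shows the integrands form a Cauchy sequence in $L^2(\mu^x_t)$, and passes to the limit in the difference-quotient identity by dominated convergence. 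That is a matter of rigor rather than a missing idea.

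Part (iii) is where you genuinely diverge and where there is a real gap. The paper never re-centers or estimates moments of $\mu^0_t$: it proves the two elementary bounds $\sup_x|\langle DR_tf(x),h\rangle|\le\|f\|_1$ (commute $D$ with $R_t$, using $\|e^{tA}\|\le1$) and $\sup_x|\langle D^2R_tf(x)h,g\rangle|\le c_{\alpha}\Lambda_t\|f\|_1$ (apply (\ref{first}) to $Df$), and then obtains $\Lambda_t^{1-\beta}\|f\|_{\beta}$ and $\Lambda_t^{2-\beta}\|f\|_{\beta}$ by interpolating these against (\ref{first}) and (\ref{second}); only your final step, interpolating between the first- and second-derivative bounds with exponent $(1-\beta)(1-r)+(2-\beta)r=1+r-\beta$, coincides with the paper's. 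Your re-centering route requires, after Cauchy--Schwarz, something like $\int_H|w|^{2\beta}\mu^0_t(dw)\lesssim\Lambda_t^{-2\beta}$, and this is precisely the step you flag as ``the main obstacle'' without resolving it. It is not merely technical: the definition (\ref{im10}) controls only the ratios $e^{-\gamma_kt}/c_k(t)\le C\Lambda_t$ and gives no upper bound on the scales $c_k(t)$ themselves, so $\sup_kc_k(t)$ need not be comparable to $\Lambda_t^{-1}$; worse, $\int_H|w|^{2\beta}\mu^0_t(dw)\le C\sum_kc_k(t)^{2\beta}$ need not even be finite under assumptions (i)--(iv). Hence the improved bounds $\|DR_tf\|_{\infty}\lesssim\Lambda_t^{1-\beta}\|f\|_{\beta}$ and $\|D^2R_tf\|_{\infty}\lesssim\Lambda_t^{2-\beta}\|f\|_{\beta}$ are not established by your argument; replacing the re-centering step by the interpolation between the $\|f\|_0$- and $\|f\|_1$-based estimates repairs it, and the rest of your part (iii) then goes through.
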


\begin{proof}
The results that $R_t f\in C^1_b(H)$, (\ref{DR_tf}), (\ref{first}) have been proved in \cite[Theorem 4.14]{EJ1}, it suffices to prove (\ref{D^2f})-(\ref{es5}). In order to show (\ref{D^2f}) and (\ref{second}), we mainly follow the steps in \cite[Theorem 4.14]{EJ1}. So, we only consider the case that $f\in C_b(H)$ is cylindrical, i.e.,
$$
f(x)=\tilde{f}(x_1,\ldots,x_j),\quad x\in H
$$
for some $j\geq 1$ and $\tilde f:\mR^j\rightarrow\mR$. We also assume that $\tilde{f}$ has bounded support in $\RR^j$. Then the general case of $f$ can be proved by an argument of approximation (see \text{Step II-Step V} in \cite[Theorem 4.14]{EJ1}).

Fix arbitrary $x, h, g\in H$ with $|h|\leq1, |g|\leq 1$. we will show that there exists $D^2_{h\otimes g}R_t f(x)$, the directional derivative of $DR_t f(x)h$, along the direction $g$ at $x$. To shorten the notation, we write
$$
\xi_{k,l}(z):=\frac{p'_{\alpha}(\frac{z_k-e^{-\gamma_k t}x_k}{c_k(t)})p'_{\alpha}(\frac{z_l-e^{-\gamma_l t}x_l}{c_l(t)})}{p_{\alpha}(\frac{z_k-e^{-\gamma_k t}x_k}{c_k(t)})p_{\alpha}(\frac{z_l-e^{-\gamma_l t}x_l}{c_l(t)})}.
$$
Let $g^N=\sum^N_{k=1}g_k e_k$, for any $m\geq \max\{j, N\}$. Then by (\ref{DR_tf}), we get
\begin{eqnarray*}
\langle D^2 R_tf(x)h, g^N\rangle &=& -\int_{\RR^m}\tilde{f}(z)\sum^{N}_{l=1}\left(\sum_{k\neq l}\xi_{k,l}(z)\frac{e^{-\gamma_k t}e^{-\gamma_l t} h_k g_l}{c_k(t)c_l(t)}\right.\\
&&\qquad+\left.\frac{p''_{\alpha}(\frac{z_l-e^{-\gamma_l t}x_l}{c_l(t)})}{p_{\alpha}(\frac{z_l-e^{-\gamma_l t}x_l}{c_l(t)})}\frac{(e^{-\gamma_l t})^2 h_l g_l}{c^2_l(t)}\right)\prod^m_{l=1}\mu^{x_l}_t(dz_l)\\
&=&-\int_{H}f(z)\sum^{N}_{l=1}\left(\sum_{k\neq l}\xi_{k,l}(z)\frac{e^{-\gamma_k t}e^{-\gamma_l t} h_k g_l}{c_k(t)c_l(t)}\right.\\
&&\qquad+\left.\frac{p''_{\alpha}(\frac{z_l-e^{-\gamma_l t}x_l}{c_l(t)})}{p_{\alpha}(\frac{z_l-e^{-\gamma_l t}x_l}{c_l(t)})}\frac{e^{-2\gamma_l t} h_l g_l}{c^2_l(t)}\right)\mu^x_t(dz).\\
\end{eqnarray*}
In order to pass to the limit, as $N\rightarrow \infty$, we show that
\begin{eqnarray}
\phi_N (t,x)&:=&\sum^{N}_{l=1}\left(\sum_{k\neq l}\xi_{k,l}(z)\frac{e^{-\gamma_k t}e^{-\gamma_l t} h_k g_l }{c_k(t)c_l(t)}\right.\nonumber\\
&&\qquad\left.+\frac{p''_{\alpha}(\frac{z_l-e^{-\gamma_l t}x_l}{c_l(t)})}{p_{\alpha}(\frac{z_l-e^{-\gamma_l t}x_l}{c_l(t)})}\frac{e^{-2\gamma_l t} h_l g_l }{c^2_l(t)}\right)\quad \text{converges in} \quad L^2(\mu^x_t).\label{L^2 convergence}
\end{eqnarray}
In fact, notice that for any $k\neq l$,
\begin{eqnarray*}
&&\int_{H}\xi_{k,l}(z)\cdot\frac{p''_{\alpha}(\frac{z_l-e^{-\gamma_l t}x_l}{c_l(t)})}{p_{\alpha}(\frac{z_l-e^{-\gamma_l t}x_l}{c_l(t)})}\mu^x_t(dz)\\
&=& \int_{\RR}\frac{p'_{\alpha}(\frac{z_k-e^{-\gamma_k t}x_k}{c_k(t)})}{p_{\alpha}(\frac{z_k-e^{-\gamma_k t}x_k}{c_k(t)})}\mu^{x_k}_t(dz_k)\int_{\RR}\frac{p'_{\alpha}(\frac{z_l-e^{-\gamma_l t}x_l}{c_l(t)})}{p_{\alpha}(\frac{z_l-e^{-\gamma_l t}x_l}{c_l(t)})}\cdot\frac{p''_{\alpha}(\frac{z_l-e^{-\gamma_l t}x_l}{c_l(t)})}{p_{\alpha}(\frac{z_l-e^{-\gamma_l t}x_l}{c_l(t)})}\mu^{x_l}_t(dz_l)\\
&=& \int_{\RR}p'_{\alpha}(y)dy\cdot\int_{\RR}\frac{p'_{\alpha}(\frac{z_l-e^{-\gamma_l t}x_l}{c_l(t)})}{p_{\alpha}(\frac{z_l-e^{-\gamma_l t}x_l}{c_l(t)})}\cdot\frac{p''_{\alpha}(\frac{z_l-e^{-\gamma_l t}x_l}{c_l(t)})}{p_{\alpha}(\frac{z_l-e^{-\gamma_l t}x_l}{c_l(t)})}\mu^{x_l}_t(dz_l)\\
&=& 0,
\end{eqnarray*}
where the last inequality by the fact that $p'_{\alpha}$ is odd. Then, for any $N,p\in \NN$,
\begin{eqnarray*}
&&\int_{H}\!\!\left|\sum^{N+p}_{l=N}\!\!\left(\sum_{k\neq l}\xi_{k,l}(z)\frac{e^{-\gamma_k t}e^{-\gamma_l t} h_k g_l }{c_k(t)c_l(t)}+\frac{p''_{\alpha}(\frac{z_l-e^{-\gamma_l t}x_l}{c_l(t)})}{p_{\alpha}(\frac{z_l-e^{-\gamma_l t}x_l}{c_l(t)})}\frac{(e^{-\gamma_l t})^2 h_l g_l }{c^2_l(t)}\right)\!\!\right|^2\mu^x_t(dz)\\
&=&\int_{H}\sum^{N+p}_{l=N}\sum_{k\neq l}\left(\xi_{k,l}(z)\frac{e^{-\gamma_k t}e^{-\gamma_l t} h_k g_l }{c_k(t)c_l(t)}\right)^2+\sum^{N+p}_{l=N}\left(\frac{p''_{\alpha}(\frac{z_l-e^{-\gamma_l t}x_l}{c_l(t)})}{p_{\alpha}(\frac{z_l-e^{-\gamma_l t}x_l}{c_l(t)})}\frac{(e^{-\gamma_l t})^2 h_l g_l }{c^2_l(t)}\right)^2\mu^x_t(dz)\\
&=&\sum^{N+p}_{l=N}\sum_{k\neq l}\left(\frac{e^{-\gamma_k t}e^{-\gamma_l t}}{c_k(t)c_l(t)}\right)^2 h^2_k g^2_l \int_{\RR}\frac{p'_{\alpha}(y_l)^2}{p_{\alpha}(y_l)}dy_l\int_{\RR}\frac{p'_{\alpha}(y_k)^2}{p_{\alpha}(y_k)}dy_k+\sum^{N+p}_{l=N}\left(\frac{e^{-\gamma_l t}}{c^2_l(t)}\right)^4 h^2_l g^2_l \int_{\RR}\frac{p''_{\alpha}(y_l)}{p_{\alpha}(y_l)}dy_l\\
&\leq& \tilde{c}^2_{\alpha}\Lambda^4_t|h|^2\sum^{N+p}_{l=N}g^2_l,
\end{eqnarray*}
where $\tilde{c}_\alpha=\sqrt{2}\max\left\{\int_{\mathbb{R}}\frac{p'_{\alpha}(z)^2}{p_{\alpha}(z)}dz, \left[\int_{\mathbb{R}}\frac{p''_{\alpha}(z)^2}{p_{\alpha}(z)}dz\right]^{1/2}\right\}$.

Note that, for any $N\in\NN$,
\begin{eqnarray*}
\langle D^2 R_tf(x)h, g^N\rangle&=&-\int_{H}f(z+e^{tA}x)\sum^{N}_{l=1}\left(\sum_{k\neq l}\frac{p'_{\alpha}(\frac{z_k}{c_k(t)})p'_{\alpha}(\frac{z_l}{c_l(t)})}{p_{\alpha}(\frac{z_k}{c_k(t)})p_{\alpha}(\frac{z_l}{c_l(t)})}\frac{e^{-\gamma_k t}e^{-\gamma_l t} h_k g_l}{c_k(t)c_l(t)}\right.\\
&&+\left.\frac{p''_{\alpha}(\frac{z_l}{c_l(t)})}{p_{\alpha}(\frac{z_l}{c_l(t)})}\frac{(e^{-\gamma_l t})^2 h_l g_l}{c^2_l(t)}\right)\mu^0_t(dz).
\end{eqnarray*}
Up to now we can showed that
\begin{eqnarray}
\frac{DR_t f(x+\epsilon g^N)h-DR_t f(x)h}{\epsilon}=\frac{1}{\epsilon}\int^{\epsilon}_0 \langle D^2R_t f(x+r g^N)h, g^N\rangle dr\label{3.4}
\end{eqnarray}
Using (\ref{L^2 convergence}), it is easy to see that, for any $r\in (0,1)$, $N\in\NN$,
\begin{eqnarray}
&&\lim_{N\rightarrow \infty}\langle D^2 R_tf(x+rg^N)h, g^N\rangle\nonumber\\
&=&-\int_{H}f(z+e^{tA}(x+rg))\sum^{\infty}_{l=1}\left(\sum_{k\neq l}\frac{p'_{\alpha}(\frac{z_k}{c_k(t)})p'_{\alpha}(\frac{z_l}{c_l(t)})}{p_{\alpha}(\frac{z_k}{c_k(t)})p_{\alpha}(\frac{z_l}{c_l(t)})}\frac{e^{-\gamma_k t}e^{-\gamma_l t} h_k g_l}{c_k(t)c_l(t)}\right.\nonumber\\
&&+\left.\frac{p''_{\alpha}(\frac{z_l}{c_l(t)})}{p_{\alpha}(\frac{z_l}{c_l(t)})}\frac{(e^{-\gamma_l t})^2 h_l g_l}{c^2_l(t)}\right)\mu^0_t(dz).\label{3.5}
\end{eqnarray}
Moreover, for any $r\in (0,1)$, $|\langle D^2 R_tf(x+rg^N)h, g^N\rangle|\leq\tilde{c}_\alpha \Lambda^2_t\|f\|_0$. Then by dominated convergence theorem in (\ref{3.4}), we obtain
\begin{eqnarray*}
\frac{DR_t f(x+\epsilon g)h-DR_t f(x)h}{\epsilon}=\frac{1}{\epsilon}\int^{\epsilon}_0 u(t,h,x+rg)dr,\quad \epsilon\in(0,1),
\end{eqnarray*}
where $u(t,x+rg)$ is the right-hand side of (\ref{3.5}). This shows that $DR_tf(x)h$ is G\^{a}teaux differentiable at $x\in H$ along the direction $h$ and (\ref{D^2f}), (\ref{second}) hold.

\vspace{2mm}
We proceed to show (\ref{es5}). It is easy to see that
\begin{align}
\sup_{x\in H}|\< DR_t f(x), h\>|\leq \|f\|_1.   \label{es1}
\end{align}
By taking $\nabla f$ into (\ref{first}), we can also obtain
\begin{align}
\sup_{x\in H}|\< D^2R_t f(x)h, g\>|\leq c_\alpha \Lambda_t\|f\|_1.    \label{es2}
\end{align}
Thus, we have by (\ref{first}) and (\ref{es1}) that
\begin{align*}
\sup_{x\in H}|\< DR_t f(x), h\>|\leq \hat c_\alpha \Lambda_t^{1-\beta}\|f\|_\beta,
\end{align*}
and similarly, it holds by (\ref{second}) and (\ref{es2}) that
\begin{align*}
\sup_{x\in H}|\< D^2R_t f(x)h, g\>|\leq \hat c_\alpha {\Lambda_t}^{2-\beta}\|f\|_\beta.
\end{align*}
Combing the above two inequalities with the interpolation theory, we can get the desired result.
\end{proof}

\begin{remark}
By the similar argument above, we could obtain $R_t f\in C^{\infty}_b(H)$, for any $t>0$, $f\in B_b(H)$. Moreover, there exists a positive constant $C_n$ such that
$$
\sup_{x\in H}\|D^n R_t f(x)\|\leq C_n \Lambda^n_t\|f\|_0.
$$
\end{remark}

\begin{remark}\label{example}
If there exists $r\geq -1/\alpha$ such that
$$
\beta_n\geq C \gamma_n^{-r},\quad \forall n\geq 1,
$$
then we can give a upper bound of $\Lambda_t$ in Theorem \ref{T3.1}, i.e.,
$$
\Lambda_t=\sup_{n\geq1}\frac{e^{-{\gamma_n t}}\gamma^{1/\alpha}_n}{\beta_n}\leq \frac{C}{t^{r+\frac{1}{\alpha}}},\quad t>0.
$$
\end{remark}

\subsection{\bf Elliptic Kolmogorov equation}

In this subsection, we always assume that (i), (ii)' and (iv) hold.
Now, we study the following Kolmogorov equation in $H$
\begin{eqnarray}
\lambda U-\langle B, DU\rangle- \sL U=F,\label{4.1}
\end{eqnarray}
where $\lambda>0$, $F\in B_b(H,H)$ and the operator $\sL$ is the infinitesimal generator of OU-semigroup $R_t$, i.e.,
 $$\sL U(x)=\langle Ax, DU(x)\rangle+\sum_{k=1}\beta^{\alpha}_k\int_{\RR}[U(x+e_k z)-U(x)-\langle DU(x), e_k z\rangle1_{\{|z|\leq 1\}}]\frac{c_\alpha}{|z|^{1+\alpha}}dz.$$

\begin{theorem}\label{pde}
Assume that $B\in C_b^\beta(H, H)$ for some $\beta>0$. Then, for $\lambda$ big enough, every $F\in C_b^\beta(H, H)$ and any $0<\theta<\beta$, there exists a function $U\in C_b^{\gamma+\theta}(H,H)$ satisfying the following integral equation:
\begin{align}
U(x)=\int_0^\infty\!\e^{-\lambda t}R_t\Big(\<B, DU\>+F\Big)(x)\dif t.   \label{inte}
\end{align}
Moreover, $U$ also solves equation (\ref{4.1}) and we have
\begin{align}
\|U\|_{\gamma+\theta}\leq C_{\lambda} \|F\|_{\beta},  \label{est}
\end{align}
where $C_\lambda$ is a positive constant satisfying $\lim_{\lambda\rightarrow +\infty}C_\lambda=0$.
\end{theorem}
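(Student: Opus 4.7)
The plan is to produce $U$ as the fixed point of the operator
\[
(\mathcal{T}V)(x) := \int_0^\infty e^{-\lambda t} R_t\bigl(\langle B, DV\rangle + F\bigr)(x)\,\dif t
\]
on the Banach space $E := C_b^{\gamma+\theta}(H,H)$. By construction, a fixed point of $\mathcal{T}$ satisfies the integral equation (\ref{inte}); the estimate (\ref{est}) will fall out of the same bound that shows $\mathcal{T}$ is a contraction, and the fact that $U$ solves the differential equation (\ref{4.1}) will follow from the interpretation of $R_t$ as the semigroup generated by $\sL$.

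For $V\in E$, write $G := \langle B, DV\rangle + F$. Since $DV\in C_b^{\gamma+\theta-1}$ and $B,F\in C_b^\beta$, the product rule for Hölder functions gives $G\in C_b^\sigma$ with $\sigma := \min(\beta,\gamma+\theta-1)$ and
\[
\|G\|_\sigma \leq C\|B\|_\beta\|V\|_{\gamma+\theta} + \|F\|_\beta.
\]
Because $\gamma>1$ and $\theta<\beta$, we have $\sigma>\theta$ in every case. Applying (\ref{es5}) with Hölder exponent $r := \gamma+\theta-1$ and regularity index $\sigma$ yields
\[
|\langle DR_tG(x)-DR_tG(y),h\rangle| \leq C_\alpha \Lambda_t^{1+r-\sigma}\|G\|_\sigma|x-y|^r,
\]
and $1+r-\sigma = \gamma+\theta-\sigma < \gamma$, so assumption (iv) together with dominated convergence makes $\int_0^\infty e^{-\lambda t}\Lambda_t^{1+r-\sigma}\dif t$ finite and arbitrarily small for $\lambda$ large. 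Straightforward applications of (\ref{first}), interpolated against the trivial bound $\|R_tG\|_\infty \leq \|G\|_\infty$ exactly as in the proof of Theorem \ref{T3.1}, control the $\sup$ norm and the first derivative of $\mathcal{T}V$ in the same way. Summing the three contributions gives
\[
\|\mathcal{T}V\|_{\gamma+\theta} \leq C_\lambda\bigl(\|B\|_\beta\|V\|_{\gamma+\theta} + \|F\|_\beta\bigr),\qquad C_\lambda\downarrow 0 \text{ as }\lambda\to\infty.
\]
The same computation applied to the difference $\mathcal{T}V_1-\mathcal{T}V_2$ shows that $\mathcal{T}$ is a strict contraction on $E$ once $C_\lambda\|B\|_\beta<1$, and Banach's fixed point theorem supplies a unique $U\in E$ satisfying (\ref{inte}); feeding the identity $U=\mathcal{T}U$ back into the displayed estimate and solving for $\|U\|_{\gamma+\theta}$ produces (\ref{est}) with $C_\lambda\to 0$.

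To pass from the integral equation (\ref{inte}) to the differential equation (\ref{4.1}), one uses that $R_t$ is the semigroup generated by $\sL$. Rewriting (\ref{inte}) as $R_sU(x) = \int_s^\infty e^{-\lambda(t-s)}R_tG(x)\,\dif t$ and differentiating at $s=0$ produces formally $\sL U(x) = \lambda U(x) - G(x)$, i.e.\ equation (\ref{4.1}). The $C_b^{\gamma+\theta}$-regularity of $U$ (with $\gamma+\theta>1$) ensures that both the drift term $\langle Ax, DU(x)\rangle$ and the non-local integral in $\sL U(x)$ are well-defined in the pointwise sense for $x\in\mathcal{D}(A)$. I expect this last step to be the main technical obstacle: since $U$ need not belong to the classical domain of the unbounded part $\langle Ax,\cdot\rangle$ of $\sL$ at every $x\in H$, one must either interpret (\ref{4.1}) in its mild form on $\mathcal{D}(A)$ and extend by density, or invoke a justified dominated-convergence argument to exchange the $s$-derivative with the integral; the smoothing bounds in Theorem \ref{T3.1} plus condition (iv) are precisely what is needed to make either route rigorous.
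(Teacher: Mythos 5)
Your proposal is correct and follows essentially the same route as the paper: the paper constructs $U$ by the Picard iteration $U_n=\int_0^\infty\e^{-\lambda t}R_t\big(\<B,DU_{n-1}\>+F\big)\dif t$ and shows the sequence is Cauchy in $C_b^{\gamma+\theta}(H,H)$ using (\ref{es5}) and condition (iv), which is exactly your contraction-mapping argument in constructive form, and it obtains (\ref{est}) by the same absorption of the term $C_\lambda\|B\|_\beta\|U\|_{\gamma+\theta}$ for $\lambda$ large. The paper dispatches the passage from (\ref{inte}) to (\ref{4.1}) with a one-line appeal to integration by parts, so your more cautious treatment of that final step is, if anything, more detailed than the source.
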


\begin{proof}
Let us first show the estimate (\ref{est}). In fact, let $U$ satisfies (\ref{inte}), without loss of generality, we may assume that $1+\beta<\gamma+\theta<2$. Then, by (\ref{es5}) and condition (v) we have
\begin{align*}
\|U\|_{\gamma+\theta}&\leq \int_0^\infty\!\e^{-\lambda t}\Big\|R_t\Big(\<B, DU\>+F\Big)\Big\|_{\gamma+\theta}\dif t\\
&\leq\int_0^\infty\!\e^{-\lambda t}\Lambda_t^{\gamma+\theta-\beta}\dif t\cdot\|\<B, DU\>+F\|_{\beta}\\
&\leq C_{\lambda}\Big(\|F\|_{\beta}+\|B\|_{\beta}\cdot\|U\|_{1+\beta}\Big),
\end{align*}
where $C_\lambda$ is given by
$$
C_\lambda:=\int_0^\infty\!\e^{-\lambda t}\Lambda_t^{\gamma+\theta-\beta}\dif t,
$$
and by dominate convergence theorem it holds that $\lim_{\lambda\rightarrow +\infty}C_\lambda=0$. Take $\lambda$ big enough such that
$$
C_\lambda\|B\|_{\beta}<\frac{1}{2},
$$
we get the desired result.

Now, we construct the solution of (\ref{inte}) via Picard's iteration argument.
Set $U_0\equiv0$ and for $n\in \mN$, define $U_n$ recursively by
$$
U_n(x):=\int_0^\infty\!\e^{-\lambda t}R_t\Big(\<B,DU_{n-1}\>+F\Big)(x)\dif t.
$$
In view of (\ref{first}), it is easy to check that $U_1\in C^1_b(H, H)$, and $U_2$ is thus well defined, and so on. We show that $U_1\in C_b^{\gamma+\theta}(H, H)$ with any $\theta\in(0,\beta)$. In fact, thanks to (v) and using (\ref{es5}) once again, we can deduce that
\begin{align*}
DU_1(x)-DU_1(y)&=\int_0^\infty\!\e^{-\lambda t}\Big[DR_tF(x)-DR_tF(y)\Big]\dif t\\
&\leq c_{\alpha}|x-y|^{\gamma+\theta-1}\int_0^\infty\!\e^{-\lambda t}\Lambda_t^{\gamma+\theta-\beta}\dif t\cdot\|F\|_\beta=c_\alpha C_{\lambda}|x-y|^{\gamma+\theta-1}\|F\|_\beta.
\end{align*}
Notice that $\gamma+\theta-1>\beta$. As a result,
$$
\<B, DU_1\>\in C_b^\beta(H,H).
$$
Repeating the above argument, we have for every $n\in\mN$ and any $\theta\in(0,\beta)$,
$$
U_n\in C_b^{\gamma+\theta}(H, H).
$$
Moreover, for any $n>m$
\begin{align*}
U_n(x)-U_m(x)&=\int_0^\infty\!\e^{-\lambda t}R_t\Big(\langle B, DU_{n-1}-DU_{m-1}\rangle\Big)(x)\dif t,
\end{align*}
we further have that
\begin{align*}
\|U_n-U_m\|_{\gamma+\theta}&\leq \int_0^\infty\!\e^{-\lambda t}\Big\|R_t\Big(\langle B, DU_{n-1}-DU_{m-1}\rangle\Big)\Big\|_{\gamma+\theta}\dif t\\
&\leq c_{\alpha}\!\!\int_0^\infty\!\e^{-\lambda t}\Lambda^{\gamma+\theta-\beta}_t\dif t\cdot\|\langle B, DU_{n-1}-DU_{m-1}\rangle\|_{\beta}\\
&\leq 2c_{\alpha}C_{\lambda}\|B\|_{\beta}\cdot\|DU_{n-1}-DU_{m-1}\|_{\beta}\\
&\leq 2c_{\alpha}C_{\lambda}\|B\|_{\beta}\cdot\|U_{n-1}-U_{m-1}\|_{\gamma+\theta},
\end{align*}
This means that for $\lambda$ big enough, $U_n$ is Cauchy sequence in $C_b^{\gamma+\theta}(H, H)$. Thus, there exists a limit function $U\in C_b^{\gamma+\theta}(H, H)$ with $\theta\in(0,\beta)$ satisfying (\ref{inte}). The assertion that $U$ solves (\ref{4.1}) follows by integral by part. The whole proof is finished.
\end{proof}

\br
It may be expected that the optimal regularity of $U$ should be $C_b^{\alpha+\beta}(H,H)$. However, due to the uncertain of $\Lambda_t$, we can not obtain thus estimate. Nevertheless, this is enough for us to prove the pathwise uniqueness for SPDE (\ref{main equation}).
\er

\section{\bf Strong uniqueness}

We call a predictable $H$-valued stochastic process $\{X_t\}_{t\geq 0}$ depending on initial value $x\in H$, is a mild solution of equation (\ref{main equation}) on the filtered probability space $(\Omega,\mathcal{F}_t,\mathbb{P})$, if $\{X_t\}_{t\geq 0}$ is an $\mathcal{F}_t$-adapted and satisfies
\begin{eqnarray}
X_t=e^{tA}x+\int^t_0 e^{(t-s)A}B(X_s)ds+\int^t_0 e^{(t-s)A}dZ_s,\label{mild solution 1}
\end{eqnarray}
where the deterministic integral in (\ref{mild solution 1}) is well defined by the assumption that $B$ is bounded.
The existence of a solution to equation (\ref{main equation}) is known under our assumptions. Thus, by the classical Yamada-Watanabe principle \cite{Ik-Wa}, we only need to focus on the pathwise uniqueness.

\vskip 0.3cm
Usually, the It\^o's formula is performed for functions $f\in C_b^2(H, H)$. However, this is too strong for our latter use. Actually, $Z_t$ is a $\alpha$-stable process in our case, and we will show that It\^o's formula holds for $f\in C_b^{r}(H, H)$ with any $r>\alpha$, which is stated as follows:

\bl\label{ito}
Let $X_t$ satisfies equation (\ref{main equation}) and $f\in C^{r}_b(H, H)$ with $r>\alpha$. Then, we have
\begin{eqnarray*}
f(X_t)&=&f(x)+\int^t_0\langle AX_s+B(X_s), Df(X_s)\rangle ds\\
&&+\sum_{k=1}\beta^{\alpha}_k\int^t_0\int_{\RR}[f(X_s+e_k z)-f(X_s)-\langle Df(X_s), e_k z\rangle1_{\{|z|\leq 1\}}]\frac{1}{|z|^{1+\alpha}}dzds\\
&&+\sum_{k=1}\int^t_0\int_{\RR}\left[f(X_{s-}+\beta_k e_k z)-f(X_{s-})\right]\tilde{N}^{(k)}(dz, ds).
\end{eqnarray*}
\el
\begin{proof}
Let $f_n\in C^{2}_b(H, H)$ with $\|f_n\|_{r}\leq \|f\|_{r}$ and $\|f_n-f\|_{r'}\rightarrow 0$ for every $r'<r$. Then we can use It\^o's formula for $f_n(X_t)$, i.e.,
\begin{align*}
f_n(X_t)&=f_n(x)+\int^t_0\langle AX_s+B(X_s), Df_n(X_s)\rangle ds\\
&\quad+\sum_{k=1}\beta^{\alpha}_k\int^t_0\int_{\RR}[f_n(X_s+e_k z)-f_n(X_s)-\langle Df_n(X_s), e_k z\rangle1_{\{|z|\leq 1\}}]\frac{1}{|z|^{1+\alpha}}dzds\\
&\quad+\sum_{k=1}\int^t_0\int_{\RR}\left[f_n(X_{s-}+\beta_k e_k z)-f_n(X_{s-})\right]\tilde{N}^{(k)}(dz, ds)\\
&=:\mI_1+\mI_2+\mI_3.
\end{align*}
Now we are going to pass the limits on the both sides of the above equality. For any $x\in H$, it is easy to see that, as $n\rightarrow \infty$,
$$
\mI_1\rightarrow f(x)+\int^t_0\langle AX_s+B(X_s), Df(X_s)\rangle ds.
$$
Thanks to the assumption that $\sum_{n\geq1}\beta^{\alpha}_{n}<\infty$ and in view of the following estimates:
$$
|f_n(x+e_kz)-f_n(x)-\langle Df_n(x), e_kz\rangle|\leq \|f_n\|_{r}|z|^{r}\leq \|f\|_{r}|z|^{r},\quad  |z|\leq 1
$$
and $$
|f_n(x+e_kz)-f_n(x)|\leq 2\|f_n\|_{0}\leq 2\|f\|_0,\quad |z|\geq 1,
$$
we obtain by dominated convergence theorem that, as $n\rightarrow\infty$,
$$
\mI_2\rightarrow\sum_{k=1}\beta^{\alpha}_k\int^t_0\int_{\RR}[f(X_s+e_k z)-f(X_s)-\langle Df(X_s), e_k z\rangle1_{\{|z|\leq 1\}}]\frac{1}{|z|^{1+\alpha}}dzds.
$$
Finally, by the isometry formula we have
\begin{align*}
&\mE\left|\sum_{k=1}\int^t_0\!\!\int_{\RR}\left[f_n(X_{s-}+\beta_k e_k z)-f_n(X_{s-})-f(X_{s-}+\beta_k e_k z)+f(X_{s-})\right]\tilde{N}^{(k)}(dz, ds)\right|^2\\
&=\sum_{k=1}\int^t_0\!\!\int_{\RR}\EE\left|f_n(X_{s}+\beta_k e_k z)-f_n(X_{s})-f(X_{s-}+\beta_k e_k z)+f(X_{s-})\right|^2\nu(dz)ds\\
&=\sum_{k=1}\beta^{\alpha}_k\int^t_0\!\!\int_{\RR}\EE\left|f_n(X_{s}+ e_k z)-f_n(X_{s})-f(X_{s}+ e_k z)+f(X_{s})\right|^2\frac{1}{|z|^{1+\alpha}}dzds\\
&\rightarrow 0,\quad n\rightarrow\infty.
\end{align*}
The proof is finished.
\end{proof}

\vskip 0.3cm
Now,  assume $B\in C^{\beta}_{b}(H,H)$ and let $U$ solves the following equation:
\begin{align*}
\lambda U-\<B,DU\>-\sL U=B.
\end{align*}
According to Theorem \ref{pde}, we have $U\in C_b^{\gamma+\theta}(H, H)$ with $\theta<\beta$.
We prove the following Zvonkin's transformation.

\bl
Let $X_t$ be a solution of equation \ref{main equation}, then we have \begin{eqnarray}
X_t&=&e^{tA}(x+U(x))+\int^t_0 e^{(t-s)A}\lambda U(X_s)ds-U(X_t)-\int^t_0 Ae^{(t-s)A}U(X_s)ds\nonumber\\
&&\!\!+\sum_{k=1}\int^t_0\int_{\RR}e^{(t-s)A}\left[U(X_{s-}+\beta_k e_k z)-U(X_{s-})\right]\tilde{N}^{(k)}(ds, dz)+\int^t_0 e^{(t-s)A}dZ_s.\label{mild}
\end{eqnarray}
\el

\begin{proof}
Thanks to Lemma \ref{ito}, we can use the It\^o's formula for $U(X_t)$ to get that
\begin{eqnarray*}
dU(X_t)\!\!&=&\!\!\langle B(X_t), DU(X_t)\rangle dt+\sL U(X_t)dt\\
&\quad+&\sum_{k=1}\int_{\RR}\left[U(X_{t-}+\beta_k e_k z)-U(X_{t-})\right]\tilde{N}^{(k)}(t, dz)\\
&=& \lambda U(X_t)dt-B(X_t)dt+\sum_{k=1}\int_{\RR}\left[U(X_{t-}+\beta_k e_k z)-U(X_{t-})\right]\tilde{N}^{(k)}(dt, dz),
\end{eqnarray*}
which give a formula for $B(X_t)dt$:
$$
B(X_t)dt=\lambda U(X_t)dt-dU(X_t)+\sum_{k=1}\int_{\RR}\left[U(X_{t-}+\beta_k e_k z)-U(X_{t-})\right]\tilde{N}^{(k)}(dt, dz).
$$
We put this formula in equation (\ref{main equation}) and get
\begin{eqnarray*}
dX_t&=&AX_tdt+\lambda U(X_t)dt-dU(X_t)\nonumber\\
&&+\sum_{k=1}\int_{\RR}\left[U(X_{t-}+\beta_k e_k z)-U(X_{t-})\right]\tilde{N}^{(k)}(dt, dz)+dZ_t,
\end{eqnarray*}
then follow the usual variation of constant method and get
\begin{eqnarray*}
X_t&=&e^{tA}x+\int^t_0 e^{(t-s)A}\lambda U(X_s)ds-\int^t_0 e^{(t-s)A}dU(X_s)\nonumber\\
&&+\sum_{k=1}\int^t_0\int_{\RR}e^{(t-s)A}\left[U(X_{s-}+\beta_k e_k z)-U(X_{s-})\right]\tilde{N}^{(k)}(ds, dz)+\int^t_0 e^{(t-s)A}dZ_s.
\end{eqnarray*}
Finally, the integration by parts formula implies
\begin{eqnarray*}
X_t&=&e^{tA}(x+U(x))+\int^t_0 e^{(t-s)A}\lambda U(X_s)ds-U(X_t)-\int^t_0 Ae^{(t-s)A}U(X_s)ds\\
&&\!\!+\sum_{k=1}\int^t_0\int_{\RR}e^{(t-s)A}\left[U(X_{s-}+\beta_k e_k z)-U(X_{s-})\right]\tilde{N}^{(k)}(ds, dz)+\int^t_0 e^{(t-s)A}dZ_s.
\end{eqnarray*}
The proof is complete.
\end{proof}

\vskip 0.3cm
The following result was proved in \cite{P} in finite dimensional. For the sake of completeness, we provide a simple proof here.
\bl
For every $x,y\in H$ and any $\theta<\beta$, we have
\begin{align}
|U(x+z)-U(x)-U(y+z)+U(y)|\leq |x-y|\cdot|z|^{\gamma+\theta-1}\|U\|_{\gamma+\theta}.  \label{unun}
\end{align}
\el

\begin{proof}
Set
$$
J_zU(x):=U(x+z)-U(x).
$$
It is obvious that for any $|z|\leq1$,
$$
\|D J_zU\|_0\leq |z|^{\gamma+\theta-1}\|DU\|_{\gamma+\theta-1}.
$$
Thus, we can deduce that
$$
|J_zU(x)-J_zU(y)|\leq |x-y|\cdot\|DJ_zU\|_0\leq |x-y|\cdot|z|^{\gamma+\theta-1}\|U\|_{\gamma+\theta}.
$$
The proof is complete.
\end{proof}

\vspace{0.5cm}
We are now in the position to give the proof of our main result.\\
\textbf{Proof of Theorem \ref{main1}:}
Since uniqueness is a local property, we only need to consider on the interval $[0,T]$ with $T$ small. Let $X_t$ and $Y_t$ be two solutions of equation (\ref{main equation}) both starting from $x\in H$. Then, by (\ref{mild}) we have $V_t:=X_t-Y_t$ satisfies the following equation:
\begin{align*}
V_t&=\lambda\int^t_0 e^{(t-s)A}\Big(U(X_s)-U(Y_s)\Big)ds-\big[U(X_t)-U(Y_t)\big]\\
&\quad-\int^t_0 Ae^{(t-s)A}\Big(U(X_s)-U(Y_s)\Big)ds+I_t(X)-I_t(Y),
\end{align*}
where
$$I_t(X)=\sum_{k=1}\int^t_0\!\!\int_{\RR}e^{(t-s)A}\Big[U(X_{s-}+\beta_k e_k z)-U(X_{s-})\Big]\tilde{N}^{(k)}(ds, dz)$$
and
$$I_t(Y)=\sum_{k=1}\int^t_0\!\!\int_{\RR}e^{(t-s)A}\Big[U(Y_{s-}+\beta_k e_k z)-U(Y_{s-})\Big]\tilde{N}^{(k)}(ds, dz).$$
In view of (\ref{est}), we have
$$
\|U\|_{\gamma+\theta}\leq C_\lambda\|B\|_\beta.
$$
Notice that by the maximal inequality,
$$
\left\|\int^{\cdot}_0 Ae^{(\cdot-s)A}f_sds\right\|^2_{L^2(0,T; H)}\leq C_T\|f\|^2_{L^2(0,T; H)},
$$
where $C_T$ is a constant independent of $f$. Then we have the following estimate:
\begin{align*}
\int^T_0 |V_t|^2dt&\leq4\int^T_0 \left|\lambda\int^t_0 e^{(t-s)A}\Big(U(X_s)-U(Y_s)\Big)ds\right|^2dt\\
&+4\int^T_0 |U(X_t)-U(Y_t)|^2dt+4\int^T_0 \left|\int^t_0 Ae^{(t-s)A}\Big(U(X_s)-U(Y_s)\Big)ds\right|^2dt\\
&+4\int^T_0 |I_t(X)-I_t(Y)|^2dt\\
&\leq \left(4\lambda^2TC^2_{\lambda}\|B\|^2_{\beta}+4C^2_{\lambda}\|B\|^2_{\beta}+4C_T C^2_{\lambda}\|B\|^2_{\beta}\right)\int^T_0|V_t|^2dt\\
&+4\int^T_0 |I_t(X)-I_t(Y)|^2dt.
\end{align*}
Since $\lim_{\lambda\rightarrow\infty}C_\lambda=0$, taking $\lambda$ sufficient large such that $4C^2_{\lambda}\|B\|^2_{\beta}+4C_T C^2_{\lambda}\|B\|^2_{\beta}\leq1/3$ and $T$ small enough such that $4\lambda^2TC^2_{\lambda}\|B\|^2_{\beta}\leq 1/3$, then we have
\begin{eqnarray*}
\int^T_0 |V_t|^2dt \leq 12\int^T_0 |I_t(X)-I_t(Y)|^2dt.
\end{eqnarray*}
Meanwhile,
\begin{align*}
&\EE |I_t(X)-I_t(Y)|^2\\
&=\sum_{k=1}\int^t_0\!\!\int_{\RR}\EE\left|e^{(t-s)A}\Big[U(X_{s-}+\beta_k e_k z)-U(X_{s})-U(Y_{s}+\beta_k e_k z)+U(Y_{s-})\Big]\right|^2\nu(dz)ds\nonumber\\
&\leq \sum_{k=1}\beta^{\alpha}_k\int^t_0\sum_{m=1}e^{-2(t-s)\gamma_m}\int_{|z|\leq 1}\EE|X_s-Y_s|^2\cdot|z|^{2(\gamma+\theta-1)}\|U\|^2_{\gamma+\theta} \nu(dz)ds\\
&+\sum_{k=1}\beta^{\alpha}_k\int^t_0\sum_{m=1}e^{-2(t-s)\gamma_m}\int_{|z|> 1}2\|DU\|^2_{0}\EE|X_s-Y_s|^2 \nu(dz)ds
\end{align*}
Since we assumed that $\beta\in(1+\alpha/2-\gamma,1)$, there always exists a $\theta<\beta$ such that
$$
2(\gamma+\theta-1)>\alpha.
$$
As a result, we have by (\ref{unun})
\begin{eqnarray*}
\EE |I_t(X)-I_t(Y)|^2
&\leq& C_{\lambda}\|B\|^2_{\beta}\sum_{k=1}\beta^{\alpha}_k \int^t_0\sum_{m=1}e^{-2(t-s)\gamma_m}\EE|V_s|^2ds.
\end{eqnarray*}
Hence,
\begin{eqnarray*}
\EE \int^T_0|I_t(X)-I_t(Y)|^2dt&\leq& C_{\lambda}\|B\|^2_{\beta}\sum_{k=1}\beta^{\alpha}_k \int^T_0\int^t_0\sum_{m=1}e^{-2(t-s)\gamma_m}\EE|V_s|^2dsdt\\
&\leq& C_{\lambda}\|B\|^2_{\beta}\sum_{k=1}\beta^{\alpha}_k \int^T_0\left(\int^T_s\sum_{m=1}e^{-2(t-s)\gamma_m}dt\right)\EE|V_s|^2ds\\
&\leq& C_{\lambda}\|B\|^2_{\beta}\sum_{k=1}\beta^{\alpha}_k \left(\int^T_0\sum_{m=1}e^{-2t\gamma_m}dt\right)\int^T_0\EE|V_s|^2ds\\
&\leq& C_{\lambda}\|B\|^2_{\beta}\sum_{k=1}\beta^{\alpha}_k \sum_{m=1}\frac{1-e^{-2T\gamma_m}}{2\gamma_m}\int^T_0\EE|V_s|^2ds
\end{eqnarray*}
By assumption, $\sum_{m=1}\frac{1-e^{-2T\gamma_m}}{2\gamma_m}$ is finite and when $T\rightarrow0$, it converges to zero. Therefor we can obtain $\EE \int^T_0|I_t(X)-I_t(Y)|^2dt=0$ as $T$ small enough. This implies $X=Y$. The proof is complete.

\end{document}